\newlist{steps}{enumerate}{1}
\setlist[steps, 1]{label = Step \arabic*:}
\newtheorem{theorem}{Theorem}
\newtheorem{proposition}[theorem]{Proposition}
\newtheorem{lemma}[theorem]{Lemma}
\newtheorem{corollary}[theorem]{Corollary}
\theoremstyle{definition}
\theoremstyle{remark}
\crefname{equation}{}{}
\Crefname{equation}{Equation}{Equations}
\crefname{theorem}{Theorem}{Theorems}
\Crefname{theorem}{Theorem}{Theorems}
\crefname{lemma}{Lemma}{Lemmas}
\Crefname{lemma}{Lemma}{Lemmas}
\crefname{proposition}{Proposition}{Propositions}
\Crefname{proposition}{Proposition}{Propositions}
\crefname{corollary}{Corollary}{Corollaries}
\Crefname{corollary}{Corollary}{Corollaries}
\crefname{conjecture}{Conjecture}{Conjectures}
\Crefname{conjecture}{Conjecture}{Conjectures}
\crefname{section}{Section}{Sections}
\Crefname{section}{Section}{Sections}
\crefname{example}{Example}{Examples}
\Crefname{example}{Example}{Examples}
\crefname{problem}{Problem}{Problems}
\Crefname{problem}{Problem}{Problems}
\crefname{remark}{Remark}{Remarks}
\Crefname{remark}{Remark}{Remarks}
\crefname{figure}{Figure}{Figures}
\Crefname{figure}{Figure}{Figures}
\title{Tur\'{a}n numbers $T(n,5,3)$ and \\ graphs without induced $5$-cycles}
\author{
Iliya Bluskov 
\thanks{
P.O. Box 33031, West Vancouver, B.C. V7V 4W7, Canada,
E-mail: {\tt
lotbook@telus.net}.} 
\and
Jan de Heer 
\thanks{
Tetterode 1,
2151RC Nieuw Vennep,
Netherlands,
E-mail: {\tt
deheertjes@hetnet.nl}.} 
\and
Alexander Sidorenko\thanks{
R\'{e}nyi Institute, Budapest, Hungary,
E-mail: {\tt
sidorenko.ny@gmail.com}
} 
}
\date{\today}
\begin{document}

\maketitle

\begin{abstract}

Tur\'{a}n number $T(n,5,3)$ is the minimum size of a system of triples 
out of a base set $X$ of $n$ elements such that 
every quintuple in $X$ contains a triple from the system. 
The exact values of $T(n,5,3)$ are known for $n \leq 17$. 
Tur\'{a}n conjectured that $T(2m,5,3) = 2\binom{m}{3}$, 
and no counterexamples have been found so far. 
If this conjecture is true, then 
$T(2m+1,5,3) \geq \lceil m(m-2)(2m+1)/6\rceil$. 
We prove the matching upper bound for all $n = 2m+1 > 17$ except $n=27$. 
\end{abstract}


\section{Introduction}

Let $X$ be an $n$-element set and $n \geq k \geq r$. 
We call a system $S$ of $r$-element subsets of $X$ 
a \emph{Tur\'{a}n} $(n,k,r)$-system 
if any $k$-element subset of $X$ 
contains at least one subset from $S$. 
\emph{Tur\'{a}n number} $T(n,k,r)$ 
is the smallest size of such a system. 
Mantel \cite{Mantel:1907} (for $k=3$) 
and Tur\'{a}n \cite{Turan:1941} (for all $k$) 
found the exact values of $T(n,k,2)$. 
More information can be found in the survey \cite{Sidorenko:1995}. 

If we partition $X$ 
into almost equal parts $X'$ and $X''$ 
and take all triples within each part, 
then by the pigeonhole principle, 
we get a Tur\'{a}n $(n,5,3)$-system. 
Hence, 
\begin{equation}\label{eq:simple}
  T(n,5,3) \:\leq \binom{\lceil n/2 \rceil}{3} + 
                  \binom{\lfloor n/2 \rfloor}{3} \, .
\end{equation}
Tur\'{a}n conjectured that for even $n$, 
inequality \cref{eq:simple} becomes equality 
(see \cite{Erdos:1977,Turan:1970}).  
No counterexample has been found so far, 
and the conjecture has been verified for all even $n \leq 16$. 
The exact values of $T(n,5,3)$ and all extremal Tur\'{a}n systems 
(up to isomorphism) have been determined for $n \leq 17$
(see \cite{Boyer:1994,Markstrom:2022}): 

\vspace{3mm}
\begin{tabular}{cccccccccccccc}
  $n$           
  & 5 & 6 & 7 & 8 &  9 & 10 & 11 & 12 & 13 & 14 & 15 &  16 &  17
    \\ 
  $T(n,5,3)$    
  & 1 & 2 & 5 & 8 & 12 & 20 & 29 & 40 & 52 & 70 & 89 & 112 & 136
\end{tabular}
\vspace{3mm}

For odd $n \geq 9$, inequality \cref{eq:simple} is not tight. 
If Tur\'{a}n's conjecture is true, 
that is, $T(2m,5,3) = 2 \binom{m}{3}$, 
then by using the the inequality 
$T(n,k,r) \geq \frac{n}{n-r} \, T(n-1,k,r)$ from \cite{Katona:1964}, 
we get 
\begin{align}\label{eq:lower_bound}
  \nonumber
  T(2m+1,5,3) & \geq \left\lceil\frac{m(m-2)(2m+1)}{6} \right\rceil
  = \\ & =
  \begin{cases}
      \;\;\;\;\: \frac{1}{6} (2m+1)m(m-2) \;\;\;\;
        \;\;{\rm if}\;\; m \;\;{\rm is \; even}; \\
      \frac{1}{6} (2m-3)(m-1)(m+1) 
        \;\;{\rm if}\;\; m \;\;{\rm is \; odd}.
  \end{cases}
\end{align}

A construction based on the finite affine geometry AG(2,3) 
(see \cite{Boyer:1994,Sidorenko:1995}) 
provides an upper bound that matches \cref{eq:lower_bound} 
for $n=2m+1 \equiv 1 \!\!\pmod{8}$.
In this article, we describe new constructions that are 
based on triangles and independent sets of size $3$ 
in graphs without induced $5$-cycles. 
These constructions provide the matching upper bound 
for all $n=2m+1 > 17$ except $n=27$.

The article is organized as follows.
A general description of our constructions is given in \cref{sec:Goodman}. 
The case $n \equiv 1 \pmod{4}$ 
is considered in details in \cref{sec:regular}, 
and the case $n \equiv 3 \pmod{4}$ in \cref{sec:almost}. 
Concluding remarks are given in \cref{sec:remarks}.

\section{Triangles and anti-triangles in a graph}\label{sec:Goodman}

An \emph{anti-triangle} in a graph is a set of three independent vertices.
The $5$-cycle is the only $5$-vertex graph 
that contains neither a triangle nor an anti-triangle. 
This yields the following observation.

\begin{proposition}
If an $n$-vertex graph $G$ 
does not contain a $5$-cycle as an induced subgraph, 
then the triangles and anti-triangles of $G$ 
form a Tur\'{a}n $(n,5,3)$-system. 
\end{proposition}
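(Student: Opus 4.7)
The proof is essentially a direct unpacking of the observation preceding the proposition, so the plan is simply to make the Ramsey-type statement precise and then apply it pointwise to every $5$-subset.

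First, I would verify the cited claim that $C_5$ is the unique $5$-vertex graph containing neither a triangle nor an anti-triangle. This is a small-case Ramsey argument: in any $5$-vertex graph $H$ with no independent triple, every vertex has degree at least $2$ (else its two non-neighbors together with itself would be an anti-triangle), so $H$ has at least $5$ edges; and if $H$ has no triangle either, then by the triangle-free Mantel bound the number of edges is at most $\lfloor 25/4 \rfloor = 6$. A brief case analysis on $5$ or $6$ edges (or, more slickly, the fact that $R(3,3)=6$ together with the uniqueness of the Ramsey extremal graph on $5$ vertices) shows that $H \cong C_5$.

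Given this, the proposition is immediate. Let $S$ denote the collection of all triangles and anti-triangles of $G$, regarded as $3$-element subsets of $V(G)$. I need to check that every $5$-element subset $K \subseteq V(G)$ contains some member of $S$. Consider the induced subgraph $G[K]$. Since $G$ contains no induced $5$-cycle, $G[K] \not\cong C_5$. By the preliminary observation, $G[K]$ therefore contains either a triangle or an anti-triangle, i.e., there is a $3$-subset $T \subseteq K$ which is a triangle or anti-triangle of $G[K]$ and hence of $G$. Thus $T \in S$ and $T \subseteq K$, so $S$ is a Tur\'an $(n,5,3)$-system.

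The only non-trivial step is the Ramsey observation, but this is standard and short; the rest of the proof is just the definition of a Tur\'an system applied to $G[K]$. There is no real obstacle.
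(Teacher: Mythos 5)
Your proof is correct and follows exactly the paper's (implicit) argument: the paper simply notes that $C_5$ is the unique $5$-vertex graph with neither a triangle nor an anti-triangle and lets the proposition follow by applying this to each $5$-subset, which is what you do. One tiny quibble: in your minimum-degree step, a vertex $v$ of degree at most $1$ has at least three non-neighbours, and you should observe that these must form a clique (else one pair together with $v$ gives an anti-triangle), which forces a triangle; but this is cosmetic, and your fallback to $R(3,3)=6$ with the uniqueness of the extremal graph settles the claim in any case.
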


In 1959, Goodman \cite{Goodman:1959} determined 
the minimum number of triangles and anti-triangles 
in an $n$-vertex graphs and characterized the extremal graphs.
A graph is called $d$-\emph{regular} if every vertex has degree $d$. 
We call a graph \emph{almost} $d$-\emph{regular} 
if all vertices except one have degree $d$ 
and the remaining vertex has degree $d \pm 1$. 

\begin{theorem}[\cite{Goodman:1959}]
The minimum number of triangles and anti-triangles 
in an $n$-vertex graph $G$ is 
\begin{equation*}
  M(n) = 
  \begin{cases}
      \;\;\;\;\;\;\;\;\;\; \;\;\;\;
      2\binom{m}{3}\;\;\;\;\;\;\;\;\;\;\;\;\;\;\;\;\;\;\;\;
      {\rm if}\;\; n=2m; \\
      \;\;\;\;\: \frac{1}{6} (2m+1)m(m-2) \;\;\;\;
        \;\;{\rm if}\;\; n=2m+1,\;\; m \;{\rm is \; even}; \\
      \frac{1}{6} (2m-3)(m-1)(m+1) 
        \;\;{\rm if}\;\; n=2m+1,\;\; m \;{\rm is \; odd}.
  \end{cases}
\end{equation*}
The total number of triangles and anti-triangles is equal to $M(n)$ 
if and only if 
all vertices of $G$ have degrees $\frac{n}{2}$ and $\frac{n}{2}-1$ $($for even $n)$, 
or $G$ is $\frac{n-1}{2}$-regular $($for $n \equiv 1 \pmod{4})$,
or $G$ is almost $\frac{n-1}{2}$-regular $($for $n \equiv 3 \pmod{4})$.
\end{theorem}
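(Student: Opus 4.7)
The plan is to use Goodman's double-counting identity, which reduces the problem to optimizing a separable sum over degree sequences. For each vertex $v$ of degree $d_v$, call a \emph{cherry at $v$} an unordered pair $\{u,w\}$ of other vertices such that exactly one of $uv, vw$ is an edge; there are $d_v(n-1-d_v)$ such cherries. A $3$-subset $\{a,b,c\}$ is a triangle or an anti-triangle iff it contains no cherry at any of its vertices, while a \emph{mixed} triple (two edges and a non-edge, or one edge and two non-edges) contains exactly two cherries, at the two vertices within the triple that are incident to both an edge and a non-edge. Summing over $v$ therefore double-counts the mixed triples, yielding
\[
  \#\{\text{triangles}\} + \#\{\text{anti-triangles}\} = \binom{n}{3} - \tfrac{1}{2}\sum_v d_v(n-1-d_v).
\]

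The problem thus reduces to maximizing $\sum_v d_v(n-1-d_v)$ subject to $(d_v)$ being the degree sequence of a graph on $n$ vertices. The function $d \mapsto d(n-1-d)$ is strictly concave as an integer function, with maximum value $\lfloor(n-1)/2\rfloor \lceil(n-1)/2\rceil$ attained exactly at $d \in \{\lfloor(n-1)/2\rfloor, \lceil(n-1)/2\rceil\}$, so the only obstruction to maximizing each term independently is the handshake parity $\sum_v d_v \equiv 0 \pmod{2}$. For $n = 2m$ every degree in $\{m-1, m\}$ is optimal and the parity constraint is easily accommodated; substituting the per-vertex value $m(m-1)$ gives $M(2m) = 2\binom{m}{3}$. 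For $n = 2m+1$ with $m$ even, the all-$m$ sequence satisfies parity since $(2m+1)m$ is even, and the per-vertex value $m^2$ yields $M(n) = (2m+1)m(m-2)/6$. For $n = 2m+1$ with $m$ odd, the all-$m$ sum is odd, so writing $d_v = m + a_v$ one must have $\sum_v a_v$ odd; the loss from the per-vertex maximum is exactly $\sum_v a_v^2$, minimized to $1$ by a single $a_v = \pm 1$, which produces the almost-regular case and $M(n) = (2m-3)(m-1)(m+1)/6$.

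The characterization of extremal graphs follows from strict concavity of $d(n-1-d)$: any degree sequence other than the ones identified above gives a strictly smaller sum, so equality in the counting identity forces the stated degree sequence. Realizability is routine, via circulant or modular constructions. The main obstacle is the parity step in the case $n \equiv 3 \pmod{4}$, where one must verify that among integer sequences $(a_v)$ with $\sum_v a_v$ odd, the minimum of $\sum_v a_v^2$ is $1$ and is attained only by a single $a_v = \pm 1$ with the remaining entries zero. This is elementary: an odd sum requires an odd number of odd summands, so at least one $a_v$ is odd, forcing $\sum_v a_v^2 \geq 1$ with equality only in the claimed configuration.
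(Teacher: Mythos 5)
The paper does not prove this statement---it is quoted directly from Goodman's 1959 paper and used as a black box---so there is no internal proof to compare against. Your argument is the classical one (essentially Goodman's own): the double count of ``cherries'' giving $\#\triangle + \#\overline{\triangle} = \binom{n}{3} - \tfrac{1}{2}\sum_v d_v(n-1-d_v)$ is correct, and the reduction to maximizing the strictly concave separable sum over degree sequences, with the handshake-parity obstruction appearing exactly when $n\equiv 3\pmod 4$, correctly yields all three values of $M(n)$ and the stated characterization of the extremal degree sequences. The only point you wave at is realizability (that some graph actually attains each optimal degree sequence), but this is indeed routine: two disjoint cliques $K_m$ for $n=2m$, and circulant constructions for the (almost) $\frac{n-1}{2}$-regular cases. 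The proof is correct and complete up to that standard detail.
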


The right hand side of \cref{eq:simple} for $n=2m$ 
and the right hand side of \cref{eq:lower_bound} for $n=2m+1$
are both equal to $M(n)$. 
It turns out that 
for each $n \neq 13$, 
there exists an $n$-vertex graph $G$ without induced $5$-cycles 
such that the number of triangles and anti-triangles in $G$ 
provides the best known upper bound for $T(n,5,3)$. 
For instance, if $n=2m$, two disjoint copies of $K_m$ 
contain $2\binom{m}{3}$ triangles, 
no anti-triangles, 
and no induced $5$-cycles. 
The case of odd $n$ is more complicated. 
Our main result is 

\begin{theorem}\label{th:main}
For every odd $n \geq 17$, with the exception of $n=27$, 
there exists an $\frac{n-1}{2}$-regular $($if $n \equiv 1 \pmod{4})$
or an almost $\frac{n-1}{2}$-regular $($if $n \equiv 3 \pmod{4})$
$n$-vertex graph without induced $5$-cycles. 
For $n=27$, there is 
a $27$-vertex graph without induced $5$-cycles 
with the total number of triangles and anti-triangles 
equal to $M(27)+1$. 
\end{theorem}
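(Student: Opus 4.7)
The plan is to split by the residue of $n$ modulo~$4$ and construct the required graphs explicitly in each case, corresponding to \cref{sec:regular,sec:almost}.

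For $n \equiv 1 \pmod{4}$, write $n = 4k+1$; the required degree $\frac{n-1}{2} = 2k$ is even, so there is no parity obstruction to regularity. My first attempt would be to look for suitable Cayley graphs $G = \mathrm{Cay}(\mathbb{Z}_n, S)$ with a symmetric connection set $S$ of size $2k$. For such a graph, an induced $C_5$ corresponds, up to translation, to a $5$-tuple $(0, v_1, v_2, v_3, v_4)$ of distinct elements of $\mathbb{Z}_n$ whose consecutive differences lie in $S$ while the non-consecutive differences lie outside $S \cup \{0\}$, so avoiding induced $C_5$ reduces to a combinatorial constraint on $S$. Since the $\mathrm{AG}(2,3)$-based construction already handles $n \equiv 1 \pmod{8}$, the remaining residues modulo~$8$ are the ones needing attention; there one might also build the graph from smaller induced-$C_5$-free blocks via disjoint union or complete join, since both operations preserve induced-$C_5$-freeness ($C_5$ is connected and cannot be expressed as the join of two smaller graphs).

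For $n \equiv 3 \pmod{4}$, an exact $\frac{n-1}{2}$-regular graph on $n$ vertices does not exist, because $n \cdot \frac{n-1}{2}$ is odd. I would instead start from a regular or almost-regular induced-$C_5$-free graph of nearby order and surgically add or remove a single vertex together with a carefully chosen neighborhood, so as to produce an almost $\frac{n-1}{2}$-regular graph on $n$ vertices while still avoiding any induced $C_5$. Small base cases (starting at $n = 19$) would be handled individually.

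The main obstacle, in both cases, is verifying the absence of induced $C_5$ uniformly across the residue classes that remain after the $\mathrm{AG}(2,3)$-construction handles $n \equiv 1 \pmod{8}$. I expect this verification to occupy most of the technical work, carried out either by a design-theoretic counting argument or by a case analysis on the edge patterns among any $5$ vertices. Finally, the case $n = 27$ is handled separately: since it is the sole exception in the theorem, the argument there would produce a specific $27$-vertex induced-$C_5$-free graph whose total number of triangles and anti-triangles is exactly $M(27) + 1$, most likely by a small perturbation of a near-extremal candidate.
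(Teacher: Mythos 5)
There is a genuine gap: your proposal never actually produces the graphs, and the two construction mechanisms you suggest for the hard case cannot work as described. For the degree $d=\frac{n-1}{2}$, a single disjoint union or a single complete join of two blocks is provably impossible: if $G=G_1\cup G_2$ is $d$-regular with parts of sizes $n_1,n_2$, then each part must have at least $d+1=\frac{n+1}{2}$ vertices, so $n_1+n_2\geq n+1>n$; and the join case reduces to this one by complementation (the complement of an $\frac{n-1}{2}$-regular graph on $n$ vertices is again $\frac{n-1}{2}$-regular, and the complement of a join is a disjoint union). Iterating these two operations only yields cographs, and the same argument applied to the top-level decomposition rules those out too. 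This is exactly why the paper needs a more refined composition --- the $3\times 3$ grid graph of nine regular blocks, where the row/column join structure lets each block have degree roughly $\frac{n-9}{18}$ while the assembled graph reaches degree $\frac{n-1}{2}$, and where induced-$C_5$-freeness of the blocks still propagates to the whole. Your alternative suggestion (circulants $\mathrm{Cay}(\mathbb{Z}_n,S)$ with $|S|=\frac{n-1}{2}$) is left entirely unverified: you exhibit no connection set and no argument that one avoiding induced $5$-cycles exists for the residues not covered by the $\mathrm{AG}(2,3)$ construction, and you yourself concede that this verification is ``most of the technical work.''

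The $n\equiv 3\pmod 4$ half and the $n=27$ case have the same problem in sharper form. ``Surgically add or remove a single vertex with a carefully chosen neighborhood'' is not an argument --- such surgery generically creates induced $5$-cycles, and you give no criterion for choosing the neighborhood nor any proof that a valid choice exists for every $n$ in the class. The paper instead reuses the grid construction with one almost-regular block in the corner, supported by a family of explicit parameter tables (one for each residue of $m$ modulo $18$, plus finitely many small exceptional values) together with inductive lemmas guaranteeing that $d$-regular induced-$C_5$-free graphs exist for every feasible $(n,d)$ with $d\neq\frac{n-1}{2}$. For $n=27$ the paper writes down a concrete grid graph with $24$ vertices of degree $13$ and three of degree $12$; ``most likely by a small perturbation of a near-extremal candidate'' does not establish the claimed bound $M(27)+1$. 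In short, the proposal correctly identifies the parity obstruction for $n\equiv 3\pmod 4$ and the fact that disjoint unions and joins preserve induced-$C_5$-freeness, but it is missing the central idea (the grid composition) and all of the explicit constructions that constitute the proof.
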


\begin{corollary}
$T(n,5,3) \leq M(n)$ for $n \geq 17$, $n \neq 27$, and 
$T(27,5,3) \leq M(27)+1 = 645$. 
\end{corollary}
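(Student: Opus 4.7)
The plan is to split the statement by the residue of $n$ modulo $4$ and, in each case, exhibit an explicit $n$-vertex graph $G$ with the required degree profile and no induced $5$-cycle. The corollary then follows immediately from Proposition 2 (triangles plus anti-triangles of such a $G$ form a Tur\'{a}n $(n,5,3)$-system) combined with Goodman's theorem (which pins the size of this system to $M(n)$ in the main cases and $M(27)+1$ in the exceptional case).

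For the regular case $n \equiv 1 \pmod 4$ (so that $m=(n-1)/2$ is even and $m$-regular graphs on $n$ vertices exist on parity grounds) the natural framework is a Cayley graph on $\mathbb{Z}_n$ (or another abelian group of order $n$) with a symmetric connection set $S$ of size $m$: regularity is then automatic, and the absence of an induced $5$-cycle translates into a Schur-type forbidden-configuration condition on $S$, ruling out ordered five-term sequences whose partial sums describe a chordless $5$-cycle. For $n \equiv 1 \pmod 8$ the construction coming from $\mathrm{AG}(2,3)$ already provides such an $S$, so the novel content is the sub-case $n \equiv 5 \pmod 8$, where $S$ will presumably be built from arithmetic progressions or unions of cosets modulo a small divisor of $n$, possibly adjusted by a correction set to enforce the forbidden-configuration property.

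For the almost regular case $n \equiv 3 \pmod 4$ (where the handshake lemma prohibits exact $m$-regularity), the target graphs have a single vertex of degree $m \pm 1$ and all remaining degrees equal to $m$. A natural attack is either to use a near-symmetric circulant-style construction in $\mathbb{Z}_n$ that produces exactly one degree anomaly, or to add a single vertex with a carefully chosen neighborhood to a regular $C_5$-induced-free construction on $n-1$ vertices. The exception $n=27$ suggests that, in this one residue class, the combinatorial constraints leave no room to enforce induced $C_5$-freeness jointly with almost $13$-regularity; accordingly, for $n=27$ we instead construct a graph whose triangles and anti-triangles total $M(27)+1 = 645$, exceeding the Goodman minimum by a single unit.

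The principal obstacle throughout is verifying that the constructed graph has no induced $C_5$. Even in a highly symmetric Cayley graph this is not an algebraic identity but a finite yet delicate case analysis, and for generic connection sets it fails. Accordingly, the technical core of Sections 3 and 4 should consist of (a) the definition of the right connection set or the right local modification of a regular graph, and (b) a structural or direct enumeration argument establishing the Schur-type forbidden-configuration property. Small boundary values of $n$ for which the general family does not yet apply, as well as the exceptional $n=27$, will likely require separate (possibly computer-assisted) constructions.
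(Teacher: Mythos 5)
Your reduction of the corollary is the same as the paper's: once one has, for each admissible $n$, an $\frac{n-1}{2}$-regular (or almost $\frac{n-1}{2}$-regular) $n$-vertex graph with no induced $5$-cycle, the triangles and anti-triangles form a Tur\'{a}n $(n,5,3)$-system by the Proposition, and Goodman's theorem says this system has exactly $M(n)$ members (resp.\ $M(27)+1$ for the exceptional $27$-vertex graph). That part is fine and is exactly how the paper derives the corollary from its Theorem~\ref{th:main}.

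The gap is that everything after the reduction is a plan rather than a proof, and the plan points in a direction the paper deliberately avoids. You propose Cayley/circulant graphs on $\mathbb{Z}_n$ with a symmetric connection set $S$ of size $\frac{n-1}{2}$, but you never exhibit a single such $S$, never verify the ``Schur-type forbidden-configuration'' condition for any $n$, and give no reason to believe such connection sets exist for all residues (for $n\equiv 3\pmod 4$ a circulant cannot even be almost regular in the required sense without an ad hoc modification you do not specify). The paper instead builds the required graphs as $3\times 3$ \emph{grid graphs}: nine small regular blocks $G_1,\dots,G_9$ without induced $5$-cycles, joined completely along rows and columns. The two facts that make this work are (i) a grid of induced-$C_5$-free graphs is induced-$C_5$-free, and (ii) regularity of the grid reduces to a small linear system in the block parameters $(n_i,d_i)$, which is then solved case by case modulo $36$ (for $n\equiv 1\pmod 4$) and modulo $18$ (for $n\equiv 3\pmod 4$), with the small blocks supplied by easy inductive lemmas (Lemmas~\ref{th:reg_even}--\ref{th:almost}) that handle every degree \emph{except} $d=\frac{n-1}{2}$. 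None of this machinery, nor any substitute for it, appears in your proposal, so the existence of the graphs --- which is the entire content of the theorem the corollary rests on --- remains unproved. Your aside that the $\mathrm{AG}(2,3)$ construction ``already provides such an $S$'' for $n\equiv 1\pmod 8$ is also unsupported: the paper cites that construction as giving a Tur\'{a}n system directly, not a regular induced-$C_5$-free Cayley graph.
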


As we will show in \cref{sec:regular,sec:almost}, 
it is quite easy to construct 
an (almost) $d$-regular $n$-vertex graph without induced $5$-cycles 
as long as $d \neq \frac{n-1}{2}$. 
The idea of the proof of \cref{th:main} 
is to assemble an (almost) $\frac{n-1}{2}$-regular $n$-vertex graph 
without induced $5$-cycles 
out of smaller (almost) regular graphs of various degrees.

For disjoint graphs $G_1,G_2,\ldots,G_9$, 
we define their \emph{grid graph} 
$\left[\begin{smallmatrix}
  G_1 & G_2 & G_3 \\
  G_4 & G_5 & G_6 \\
  G_7 & G_8 & G_9 
\end{smallmatrix}\right]$ 
as the union of $G_1,G_2,\ldots,G_9$ with addition of edges 
between the vertices of graphs that appear 
in the same row or in the same column. 
It is easy to see that if $G_1,G_2,\ldots,G_9$ 
do not contain induced $5$-cycles, 
then the grid graph does not contain such cycles either. 
In the case when $G_i$ is a $d_i$-regular graph with $n_i$ vertices, 
we will also denote the grid graph by 
\[\begin{bmatrix}
  (n_1,d_1) & (n_2,d_2) & (n_3,d_3) \\
  (n_4,d_4) & (n_5,d_5) & (n_6,d_6) \\
  (n_7,d_7) & (n_8,d_8) & (n_9,d_9)
\end{bmatrix}.\] 
This graph is $n$-vertex and $\frac{n-1}{2}$-regular if and only if 
({\it a}) $n_1+n_2+\ldots+n_9=n$, and 
({\it (b} )for each $i=1,2,\ldots,9$,
the sum of $n_j$ with $j \neq i$ from the same row and column plus $d_i$ 
is equal to $\frac{n-1}{2}$. 
In particular, 
$n_2+n_3+n_4+n_7+d_1 = \frac{n-1}{2}$,
$n_1+n_3+n_5+n_8+d_2 = \frac{n-1}{2}$, and so on.
When trying to find suitable $n_i,d_i$, 
we have $9 \cdot 2 = 18$ variables 
to satisfy $1+9=10$ equations. 
The system of these equations has a rational valued solution: 
$n_i = \frac{n}{9}$, $d_i = \frac{n-9}{18}$ ($i=1,2,\ldots,9$). 
We will be looking for an integer valued solution 
such that the component graphs with parameters $(n_i,d_i)$ exist.

\section{Regular graphs without induced 5-cycles}\label{sec:regular}

In this section, we will prove \cref{th:main} for $n \equiv 1 \pmod{4}$.

\begin{lemma}\label{th:reg_even}
For any even $n$ and any $0 \leq d \leq n-1$, 
there exists a $d$-regular graph with $n$ vertices 
that does not contain $5$-cycles as induced subgraphs. 
\end{lemma}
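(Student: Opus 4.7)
The plan is to reduce the problem to two easy facts: (i) a bipartite graph has no odd cycles, hence no induced $5$-cycle; (ii) the pentagon $C_5$ is isomorphic to its complement, so the property of containing no induced $C_5$ is preserved under graph complementation (an induced $C_5$ in $G$ would yield an induced $C_5$ in $\overline{G}$).

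First, I would handle the case $0 \le d \le n/2$. Since $n$ is even, we can split the vertex set into two classes $A$ and $B$ of size $n/2$ each, and build a $d$-regular bipartite graph between them. A concrete choice is the circulant construction: label $A = \{a_0, \dots, a_{n/2-1}\}$, $B = \{b_0, \dots, b_{n/2-1}\}$, and join $a_i$ to $b_{(i+j) \bmod n/2}$ for $j = 0, 1, \dots, d-1$. This graph is bipartite and $d$-regular, and by (i) it has no induced $5$-cycle.

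Next, for $n/2 < d \le n-1$, set $d' = n-1-d$. Then $0 \le d' < n/2$, so by the previous case there exists a $d'$-regular bipartite graph $H$ on $n$ vertices with no induced $C_5$. Its complement $\overline{H}$ is $(n-1-d')$-regular $= d$-regular on the same vertex set, and by (ii) $\overline{H}$ also contains no induced $C_5$, completing the construction.

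There is no real obstacle here: the proof is a two-line case split based on self-complementarity of $C_5$. The only thing to verify is that the explicit bipartite construction realizes every degree $0 \le d \le n/2$, which follows from the circulant definition above.
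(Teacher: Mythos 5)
Your proof is correct, and it takes a genuinely different route from the paper's. The paper proceeds by induction on $n$: for $d=\frac{n}{2}-1$ it takes two disjoint copies of $K_{n/2}$, for smaller $d$ it splits the vertex set into two even-sized pieces each carrying a $d$-regular graph supplied by the induction hypothesis, and for $d\geq \frac{n}{2}$ it passes to the complement. You instead give a one-shot explicit construction: a $d$-regular bipartite circulant for $0 \leq d \leq \frac{n}{2}$ (which has no $5$-cycle at all, induced or otherwise, since it has no odd cycle), together with the same complementation step for larger $d$, justified by the self-complementarity of $C_5$. Both arguments are sound; the complementation step is common to both. Your version is shorter and avoids induction entirely, and the degree bookkeeping for the circulant is immediate. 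The paper's inductive construction yields disjoint unions of cliques and their complements, which is perhaps more in the spirit of the triangle/anti-triangle counting elsewhere in the paper, but nothing in the later lemmas depends on the particular graphs produced---only on their existence---so your proof is a valid drop-in replacement.
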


\begin{proof}[\bf{Proof}]
We will use induction on $n$. 
The base case $n=2$ is trivial. 
Let $n=2m \geq 4$. 
If $d=m-1$, the union of two disjoint cliques of size $m$ 
gives the graph we seek.
Suppose $0 \leq d \leq m-2$. 
Let $r$ be the even number from $\{d+1,d+2\}$. 
Then $n-r \geq 2m - (d+2) \geq m > d$. 
By the induction hypothesis, 
there exists a $d$-regular graph $G_{r,d}$ with $r$ vertices 
and a $d$-regular graph $G_{n-r,d}$ with $n-r$ vertices 
that do not contain $5$-cycles as induced subgraphs. 
Then the union of disjoint copies of $G_{r,d}$ and $G_{n-r,d}$ 
is the graph we seek. 
If $d\geq m$, set $d' := (n-1)-d$. Then $0 \leq d' \leq m-1$. 
We proved above that there exists 
an $n$-vertex $d'$-regular graph without induced $5$-cycles. 
Its complement is a $d$-regular graph without induced $5$-cycles.
\end{proof}

\begin{lemma}\label{th:reg_odd}
For any odd $n$ and any even $d$ with 
$0 \leq d \leq n-1$ and $d \neq \frac{n-1}{2}$, 
there exists a $d$-regular graph with $n$ vertices 
that does not contain $5$-cycles as induced subgraphs. 
\end{lemma}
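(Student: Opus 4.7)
The plan is to reduce the odd-vertex case to the even-vertex case handled by \cref{th:reg_even}, using two simple operations: passing to the complement when $d$ is large, and taking a disjoint union with a clique when $d$ is small. The exclusion $d \neq (n-1)/2$ is only substantive when $n \equiv 1 \pmod 4$, because for $n \equiv 3 \pmod 4$ the value $(n-1)/2$ is odd and hence not among the admissible even degrees $d$ anyway.

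First, I would reduce to the range $0 \leq d \leq (n-3)/2$ by complementation. The $5$-cycle is self-complementary ($\overline{C_5} = C_5$), so complementing an $n$-vertex graph preserves the property of having no induced $5$-cycle and sends $d'$-regular graphs to $(n-1-d')$-regular ones. If $d \geq (n+1)/2$, then $d' := n-1-d$ is even (since $n-1$ and $d$ are both even), satisfies $d' \leq (n-3)/2$, and $d' \neq (n-1)/2$, so producing a $d'$-regular $C_5$-free graph and taking its complement yields the desired graph.

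For $0 \leq d \leq (n-3)/2$ with $d$ even, I would construct $G = K_{d+1} \sqcup H$ as a disjoint union, where $H$ is a $d$-regular graph on $n-d-1$ vertices without induced $5$-cycles. Parity checks out: $d$ even makes $d+1$ odd, so $n-d-1$ is even, and $d \leq (n-3)/2$ forces $n-d-1 \geq (n+1)/2 \geq d+1$, so \cref{th:reg_even} supplies the required $H$. The clique $K_{d+1}$ is $d$-regular and has no induced $C_5$, and a disjoint union of $C_5$-free graphs is $C_5$-free because an induced $5$-cycle, being connected, lies in a single component. The resulting graph has $(d+1)+(n-d-1)=n$ vertices and is $d$-regular.

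The whole argument is essentially bookkeeping. The one place where something could go wrong is at the boundary $d = (n-1)/2$, where the disjoint-union construction breaks down (one would need an $((n-1)/2)$-regular graph on $(n-1)/2$ vertices, which is impossible), and this is precisely why that value is excluded from the statement; handling it requires the more elaborate grid constructions of the later sections. Beyond this, I see no real obstacle.
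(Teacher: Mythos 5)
Your proposal is correct and follows essentially the same route as the paper's proof: reduce to $0 \leq d \leq (n-3)/2$ via complementation (using that $C_5$ is self-complementary), then take the disjoint union of $K_{d+1}$ with a $d$-regular graph on the remaining $n-d-1$ (even) vertices supplied by \cref{th:reg_even}. Your added remarks on the parity bookkeeping and on why $d=(n-1)/2$ must be excluded are accurate.
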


\begin{proof}[\bf{Proof}]
Let $n=2m+1$, so $d \neq m$. 
Suppose $0 \leq d \leq m-1$. 
By \cref{th:reg_even}, 
there exists a $d$-regular graph $G_{n-d-1,d}$ with $n-d-1$ vertices 
that does not contain $5$-cycles as an induced subgraphs. 
Then the union of $G_{n-d-1,d}$ and $K_{d+1}$
is an $n$-vertex $d$-regular graph without induced $5$-cycles. 
Now suppose $d>m-1$. As $d \neq m$, we have $d \geq m+1$. 
Set $d' := (n-1)-d$. Then $d'$ is even and $0 \leq d' \leq m-1$. 
We proved above that there exists 
an $n$-vertex $d'$-regular graph without induced $5$-cycles. 
Its complement is a $d$-regular graph without induced $5$-cycles.
\end{proof}

\begin{lemma}\label{th:4r}
For every $m \equiv 0 \pmod{4}$, 
there exists an $m$-regular graph with $2m+1$ vertices 
that does not contain $5$-cycles as induced subgraphs. 
\end{lemma}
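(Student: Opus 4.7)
The plan is to apply the grid construction of \cref{sec:Goodman} with a symmetric layout. Writing $m = 4t$, so $n = 2m+1 = 8t+1$, I would take the $3 \times 3$ grid in which the eight non-central cells each contain $t$ vertices and the central cell contains a single vertex. The four corner components (positions $1,3,7,9$) are taken to be the empty graph $\overline{K_t}$, the four edge-midpoint components (positions $2,4,6,8$) the complete graph $K_t$, and the center $K_1$; schematically,
\[
\begin{bmatrix}
 (t,0) & (t,t-1) & (t,0) \\
 (t,t-1) & (1,0) & (t,t-1) \\
 (t,0) & (t,t-1) & (t,0)
\end{bmatrix}.
\]

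To verify that this yields an $\frac{n-1}{2}$-regular $n$-vertex graph, I would check conditions (a) and (b) recalled in \cref{sec:Goodman}. Condition (a) is $4t + 4t + 1 = 8t+1 = n$. For condition (b): a corner vertex has no internal neighbor and sees $2t + 2t = 4t$ external vertices; an edge-midpoint vertex has $t-1$ internal neighbors in $K_t$ and $2t + (t+1) = 3t+1$ external neighbors; and the central vertex has no internal neighbor and $2t + 2t = 4t$ external neighbors. In each case the total degree is $4t = m$, as required. Since each component graph is trivially $C_5$-free (being empty, complete, or a single vertex), the general observation of \cref{sec:Goodman} that the grid construction preserves the absence of induced $5$-cycles finishes the argument.

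The only real obstacle is locating sizes $n_i$ and degrees $d_i$ that realize the $\frac{n-1}{2}$-regularity equations by integer values matching graphs known to be $C_5$-free. The ``average'' rational solution $n_i = n/9$, $d_i = (n-9)/18$ is almost never integral, so a less symmetric layout is required; the configuration above exploits the divisibility $m = 4t$ to reduce the degree constraints to trivial instances that are realized by complete and empty graphs, which is why the residue class $m \equiv 0 \pmod 4$ is the easiest case to handle by a single uniform construction. No further technical difficulty is expected.
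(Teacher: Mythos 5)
Your proposal is correct and essentially identical to the paper's proof: the paper uses the grid graph $\left[\begin{smallmatrix} (1,0) & (r,r-1) & (r,r-1) \\ (r,r-1) & (r,0) & (r,0) \\ (r,r-1) & (r,0) & (r,0) \end{smallmatrix}\right]$, placing $K_1$ in a corner cell, cliques $K_r$ in the four cells sharing its row or column, and empty graphs in the remaining four cells, with the same appeal to the general observation that grid graphs preserve the absence of induced $5$-cycles. Your layout merely relocates $K_1$ to the center cell, and since the row/column adjacency pattern among the nine cells is vertex-transitive, your graph is in fact isomorphic to the paper's; your degree verification is accurate.
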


\begin{proof}[\bf{Proof}]
Let $m=4r$. The graph given by
\[\begin{bmatrix}
  (1,0) & (r,r-1) & (r,r-1) \\
  (r,r-1) & (r,0) & (r,0) \\
  (r,r-1) & (r,0) & (r,0) 
\end{bmatrix}\] 
is $(4r)$-regular, on $8r+1$ vertices and has no induced $5$-cycles. 
\end{proof}

\begin{lemma}\label{th:4r+2}
For every $m \equiv 2 \pmod{4}$ with $m \geq 10$,  
there exists an $m$-regular graph with $2m+1$ vertices 
that does not contain $5$-cycles as induced subgraphs. 
\end{lemma}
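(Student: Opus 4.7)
The plan is to continue the $3\times 3$ grid approach from Lemma 6. Writing $m = 4r+2$ with $r \geq 2$, we have $n = 8r+5$ and target degree $(n-1)/2 = 4r+2$; for a grid whose blocks have row sums $R_i$ and column sums $C_j$ (each triple summing to $n$), the regularity condition forces the internal degree in block $(i,j)$ to be
\[
  d_{(i,j)} \;=\; (4r+2) - R_i - C_j + 2\,n_{(i,j)},
\]
subject to $0 \leq d_{(i,j)} \leq n_{(i,j)} - 1$ and the handshake parity $d_{(i,j)}\,n_{(i,j)} \equiv 0 \pmod 2$. Given compatible sizes and degrees, the construction is complete once one exhibits a $C_5$-free $d_{(i,j)}$-regular graph on $n_{(i,j)}$ vertices for each block, since the grid operation preserves the absence of induced $5$-cycles.

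My first attempt is the symmetric grid
\[
  \begin{bmatrix}
    (3,2) & (r,r-2) & (r,r-2) \\
    (r,r-2) & (r+1,2) & (r,0) \\
    (r,r-2) & (r,0) & (r+1,2)
  \end{bmatrix},
\]
corresponding to $R_1 = C_1 = 2r+3$ and $R_2 = R_3 = C_2 = C_3 = 3r+1$. Its nine components can be realized as $K_3$, the complement of a perfect matching on $r$ vertices, $\overline{K_r}$, and the cycle $C_{r+1}$, all of which are $C_5$-free provided $r$ is even and $r+1 \neq 5$. This disposes of $m \equiv 2 \pmod 8$ except for $m = 18$.

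The remaining cases are $r$ odd (equivalently $m \equiv 6 \pmod 8$) and the isolated value $r = 4$ (i.e.\ $m = 18$). For $r$ odd the ansatz fails because no $(r-2)$-regular graph on an odd number of vertices exists, and for $r = 4$ it fails because the component $C_{r+1}$ becomes the forbidden $C_5$. In each such case I would perturb the row and column sums (for instance, shift $R_1,C_1$ from $2r+3$ to $2r+5$), re-solve the nine regularity equations, and insist that every resulting pair $(n_{(i,j)},d_{(i,j)})$ be realizable by a building block from the short list $\{K_s,\,\overline{K_s},\,C_s\text{ with }s\neq 5,\,\text{matching},\,\text{complement of matching}\}$, all of which are manifestly $C_5$-free. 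For $r = 3$ the explicit choice
\[
  \begin{bmatrix}
    (4,0) & (4,2) & (3,0) \\
    (4,2) & (2,0) & (3,2) \\
    (3,0) & (3,2) & (3,2)
  \end{bmatrix}
\]
works. The main obstacle will be producing a uniform family of such perturbed grids covering every odd $r \geq 3$ at once: the nine range and parity constraints leave only narrow feasibility windows, so the likely outcome is a short menu of templates indexed by $r$ modulo a small constant, supplemented by ad-hoc grids for two or three small values of $r$ before a generic pattern takes over.
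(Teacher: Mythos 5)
Your general framework is sound and is in fact the same as the paper's: build the graph as a $3\times 3$ grid of $C_5$-free regular blocks, with the correct degree identity $d_{(i,j)} = (4r+2) - R_i - C_j + 2n_{(i,j)}$, and use the fact that the grid operation preserves the absence of induced $5$-cycles. The parts you make explicit do check out: your symmetric grid is $(4r+2)$-regular on $8r+5$ vertices, its blocks ($K_3$, the complement of a perfect matching on $r$ vertices, $\overline{K_r}$, $C_{r+1}$) are all realizable and $C_5$-free exactly when $r$ is even and $r \neq 4$, so it settles $m \equiv 2 \pmod 8$, $m \geq 10$, $m \neq 18$; and your ad hoc grid for $r=3$ settles $m=14$. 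But that is where the proof ends. For the entire residue class $m \equiv 6 \pmod 8$ with $m \geq 22$, and for the isolated value $m = 18$, you offer only a search strategy (``perturb the row and column sums, re-solve, and insist every block be realizable''), and you yourself name the construction of a uniform family covering all odd $r$ as ``the main obstacle.'' An infinite family of cases of the lemma is therefore unproven: the proposal is roughly half of a proof, not a proof with a routine verification left over.

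The missing ingredient --- and precisely how the paper closes this gap --- is a much richer supply of blocks than your five-graph menu, which only provides internal degrees $0,1,2,s-2,s-1$. The paper first establishes supply lemmas: \cref{th:reg_even} (a $d$-regular $C_5$-free graph on any even number of vertices, for every $d$), \cref{th:reg_odd} (odd order, any even degree $d \neq \frac{n-1}{2}$), and \cref{th:4r} (the already-settled case $m \equiv 0 \pmod 4$, giving $4r$-regular graphs on $8r+1$ vertices). With mid-range degrees thus available, it writes seven explicit grid templates indexed by $m$ modulo $36$; these use blocks such as $(8r+1,4r+2)$, $(8r-1,4r)$, and $(8r+2,4r+3)$, whose degrees near half the block size your restricted menu cannot supply at all. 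Your prediction that ``a short menu of templates indexed by $r$ modulo a small constant'' would emerge is right in spirit --- the paper's menu has period $36$ --- but realizing it hinges on first proving the block-supply lemmas, and whether your narrow palette alone could cover $m \equiv 6 \pmod 8$ is left entirely unestablished.
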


\begin{proof}[\bf{Proof}]
We will split the proof into $7$ separate cases. 
In each case, we will construct the desired graph 
as the grid graph of regular graphs (without induced $5$-cycles) 
whose existence is guaranteed by \cref{th:reg_even,th:reg_odd,th:4r}. 

\medskip

{\it Case 1.} $m=6r-2$. (This covers cases $m \equiv 10,22,34 \pmod{36}$.) 
The graph given by
\[\begin{bmatrix}
  (2r-1,0) & (2r-1,0) & (2r-1,0) \\
  (r,r-1)  & (r,r-1)  & (r,r-1)  \\
  (r,r-1)  & (r,r-1)  & (r,r-1) 
\end{bmatrix}\] 
is $(6r-2)$-regular, on $12r-3$ vertices and has no induced $5$-cycles. 

\medskip

{\it Case 2.} $m=36r+2$ with $r \geq 1$. The graph given by 
\[\begin{bmatrix}
  (8r+1,4r+2) & (8r+1,4r-2) & (8r+1,4r-2) \\
  (8r-1,4r)   & (8r+1,4r)   & (8r+1,4r)   \\
  (8r-1,4r)   & (8r+1,4r)   & (8r+1,4r) 
\end{bmatrix}\] 
is $(36r+2)$-regular, on $72r+5$ vertices and has no induced $5$-cycles. 

\medskip

{\it Case 3.} $m=36r+6$ with $r \geq 1$. The graph given by 
\[\begin{bmatrix}
  (8r+2,4r+3) & (8r+2,4r-2) & (8r+2,4r-2) \\
  (8r  ,4r+1) & (8r+2,4r)   & (8r+2,4r)   \\
  (8r-1,4r)   & (8r+2,4r+1) & (8r+2,4r+1) 
\end{bmatrix}\] 
is $(36r+6)$-regular, on $72r+13$ vertices and has no induced $5$-cycles. 

\medskip

{\it Case 4.} $m=36r+14$. The graph given by
\[\begin{bmatrix}
  (8r+4,4r+2) & (8r+4,4r  ) & (8r+3,4r  ) \\
  (8r+3,4r+2) & (8r+3,4r  ) & (8r+3,4r+2) \\
  (8r+2,4r  ) & (8r+4,4r+2) & (8r+3,4r+2) 
\end{bmatrix}\] 
is $(36r+14)$-regular, on $72r+29$ vertices and has no induced $5$-cycles. 

\medskip

{\it Case 5.} $m=36r+18$. The graph given by
\[\begin{bmatrix}
  (8r+6,4r+2) & (8r+4,4r)   & (8r+4,4r+1) \\
  (8r+4,4r)   & (8r+4,4r+2) & (8r+4,4r+3) \\
  (8r+4,4r+1) & (8r+4,4r+3) & (8r+3,4r+2) 
\end{bmatrix}\] 
is $(36r+18)$-regular, on $72r+37$ vertices and has no induced $5$-cycles. 

\medskip

{\it Case 6.} $m=36r+26$. The graph given by
\[\begin{bmatrix}
  (8r+7,4r+2) & (8r+6,4r+1) & (8r+6,4r+3) \\
  (8r+6,4r+1) & (8r+6,4r+2) & (8r+6,4r+4) \\
  (8r+6,4r+3) & (8r+6,4r+4) & (8r+4,4r+2) 
\end{bmatrix}\] 
is $(36r+26)$-regular, on $72r+53$ vertices and has no induced $5$-cycles. 

\medskip

{\it Case 7.} $m=36r+30$. The graph given by
\[\begin{bmatrix}
  (8r+8,4r+4) & (8r+7,4r+2) & (8r+6,4r+2) \\
  (8r+7,4r+2) & (8r+7,4r+2) & (8r+7,4r+4) \\
  (8r+6,4r+2) & (8r+7,4r+4) & (8r+6,4r+4) 
\end{bmatrix}\] 
is $(36r+30)$-regular, on $72r+61$ vertices and has no induced $5$-cycles. 
\end{proof}

\section{Almost regular graphs without induced \\ 5-cycles}\label{sec:almost}

In this section, we will prove \cref{th:main} for $n \equiv 3 \pmod{4}$.

\begin{lemma}\label{th:almost}
For any $n \equiv 3 \pmod{4}$ and any odd $d$ such that 
$1 \leq d \leq n-1$ and $d \neq \frac{n-1}{2}$, 
there exists an almost $d$-regular graph with $n$ vertices 
that does not contain $5$-cycles as induced subgraphs. 
\end{lemma}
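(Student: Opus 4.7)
The plan is to imitate the template of \cref{th:reg_odd}, with a small parity fix that supplies a single vertex of degree $d+1$ to compensate for the fact that no $d$-regular graph exists when both $n$ and $d$ are odd.

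Write $n = 2m+1$. Because $n \equiv 3 \pmod{4}$, the integer $m$ is odd. Since $d$ and $m$ are both odd and $d \neq m$, the difference $d - m$ is a nonzero even integer, so the hypothesis splits into two symmetric ranges: either $d \leq m - 2$ or $d \geq m + 2$.

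For the range $d \leq m - 2$, I would construct the graph as the disjoint union of two pieces. The first piece $H$ is obtained from the clique $K_{d+2}$ by deleting a near-perfect matching of $(d+1)/2$ edges; since $d+2$ is odd, exactly one vertex is left uncovered, so $H$ has $d+1$ vertices of degree $d$ and one vertex of degree $d+1$, making it almost $d$-regular on $d+2$ vertices. Every 5-vertex induced subgraph of $H$ retains at least $\binom{5}{2} - \lfloor 5/2 \rfloor = 8$ edges, well above the $5$ edges of an induced $C_5$, so $H$ contains no induced $5$-cycle. The second piece is a $d$-regular induced-$C_5$-free graph on the even number $n - d - 2 = 2m - d - 1$ of vertices, which exists by \cref{th:reg_even} since $d \leq m - 2$ implies $d \leq (n - d - 2) - 1$. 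The disjoint union of the two pieces is almost $d$-regular on $n$ vertices, and is induced-$C_5$-free because $C_5$ is connected.

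For the range $d \geq m + 2$, I would set $d' := n - 1 - d = 2m - d$, which is odd and lies in $\{1, 3, \ldots, m - 2\}$, apply the preceding construction to produce an almost $d'$-regular induced-$C_5$-free graph $G$ on $n$ vertices, and take the complement $\overline{G}$; it is almost $d$-regular, and since $C_5$ is self-complementary, it is also induced-$C_5$-free. I do not expect any serious obstacle: the only nonroutine ingredient is the replacement of the clique $K_{d+1}$ used in \cref{th:reg_odd} by $K_{d+2}$ minus a near-perfect matching, and everything else reduces to parity and range checks.
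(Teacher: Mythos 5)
Your proposal is correct and matches the paper's own proof essentially step for step: the same piece $K_{d+2}$ minus a near-perfect matching of $(d+1)/2$ edges, joined disjointly with a $d$-regular graph on the even number $n-d-2$ of vertices from \cref{th:reg_even}, and the same complementation argument for $d \geq \frac{n-1}{2}+2$. The only cosmetic difference is that you verify induced-$C_5$-freeness of the matched-clique piece by edge counting (any $5$ vertices retain at least $8$ edges), whereas the paper notes its complement is a matching and uses self-complementarity of $C_5$; both are valid one-line checks.
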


\begin{proof}[\bf{Proof}]
Let $n=4m+3$, so $d \neq 2m+1$. 
Suppose $1 \leq d \leq 2m-1$. 
By \cref{th:reg_even}, 
there exists a $d$-regular graph $G_{n-d-2,d}$ with $n-d-2$ vertices 
that does not contain $5$-cycles as induced subgraphs. 
Let $H_{d+2}$ denote a graph obtained from $K_{d+2}$ by removing $(d+1)/2$ edges 
that do not have common vertices. 
Then $d+1$ vertices in $H_{d+2}$ have degree $d$, 
and one vertex has degree $d+1$.
Since a set of independent edges does not induce a $5$-cycle, 
$H_{d+2}$ does not induce it either. 
Then the union of $G_{n-d-2,d}$ and $H_{d+2}$
is an almost $d$-regular graph with $n$ vertices 
and without induced $5$-cycles. 
Now suppose $d>2m-1$. As $d \neq 2m+1$, we have $d \geq 2m+3$. 
Set $d' := (n-1)-d$. Then $d'$ is odd and $1 \leq d' \leq 2m-1$. 
We proved above that there exists 
an $n$-vertex $d'$-regular graph without induced $5$-cycles. 
Its complement is a $d$-regular graph without induced $5$-cycles.
\end{proof}

\begin{lemma}\label{th:4r+1_or_3}
For every $m \equiv 1 \pmod{2}$ with $m \geq 9$, $m \neq 13$, 
there exists an almost $m$-regular graph with $2m+1$ vertices 
that does not contain $5$-cycles as induced subgraphs. 
\end{lemma}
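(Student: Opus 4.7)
The plan is to mirror the strategy of \cref{th:4r+2}: for each admissible odd $m$, exhibit a nine-block grid graph (as defined in \cref{sec:Goodman}) whose components are (almost) regular graphs without induced $5$-cycles, obtained from \cref{th:reg_even,th:reg_odd,th:4r,th:4r+2,th:almost}. Since $2m+1$ is odd and $m$ is odd, no $m$-regular graph on $2m+1$ vertices exists (its degree sum would be odd), so the grid should consist of eight $d_i$-regular components and exactly one almost $d_i$-regular component, the latter contributing the sole vertex of degree $m\pm 1$ in the assembled graph.

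I would parametrize the search by residues of $m$ modulo a small period (most likely $18$ or $36$, guided by the seven-case division in \cref{th:4r+2}), and for each residue class write down an explicit $3 \times 3$ template of parameters $(n_i,d_i)$. The balanced rational solution $n_i = (2m+1)/9$, $d_i = (2m-8)/18$ is the natural starting point, and the actual $n_i,d_i$ are small integer perturbations around these values. For each candidate template I would check:
\begin{enumerate}
\item[(i)] the nine sizes sum to $2m+1$;
\item[(ii)] for every $i$, the value $d_i$ plus the sum of the other $n_j$'s in the same row and column as $i$ equals $m$, so that each vertex in block $i$ has total degree $m$ (with the single exception arising from the almost-regular block);
\item[(iii)] each $(n_i,d_i)$ satisfies the hypotheses of one of \cref{th:reg_even,th:reg_odd,th:4r,th:4r+2,th:almost}, in particular avoiding the forbidden value $d_i = (n_i-1)/2$ in \cref{th:reg_odd,th:almost} and the $m \geq 10$ exclusion of \cref{th:4r+2};
\item[(iv)] exactly one block is drawn from \cref{th:almost}, while the remaining eight are regular.
\end{enumerate}
The induced-$5$-cycle-free property is then automatic from the observation in \cref{sec:Goodman} that grid graphs preserve the absence of induced $5$-cycles.

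The main obstacle will be the case analysis itself. Unlike the even-$m$ setting of \cref{th:4r+2}, the parity $2m+1$ odd forces at least one $n_i$ odd, and combined with the requirement that all blocks be consistently (almost) regular this tightens the feasibility of the linear system in (i)--(ii) considerably and restricts which existence lemmas apply to each coordinate. Small cases $m \in \{9,11,15,17,19\}$ will most likely need ad hoc grids rather than a single parametric family, because the balanced solution is not yet in the interior of the feasible region. The excluded value $m=13$ reflects this exactly: no admissible nine-tuple exists, and correspondingly $n=27$ is the one exception in \cref{th:main}. Once suitable residue-class templates are identified, verification of each grid reduces to a routine arithmetic check.
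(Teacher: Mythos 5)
Your plan is exactly the paper's strategy: a $3\times 3$ grid graph with eight regular blocks and one almost regular block (forced, as you correctly note, by the parity of $m(2m+1)$), parametrized by the residue of $m$ modulo $18$, with small values of $m$ handled by ad hoc grids. Your feasibility conditions (i)--(iv) are also the right ones. However, as written the proposal is a search specification rather than a proof: the entire mathematical content of this lemma is the explicit exhibition of parameter tables that simultaneously satisfy the ten linear equations, the parity constraints, and the hypotheses of the existence lemmas (in particular avoiding $d_i = (n_i-1)/2$ on odd blocks), and you never produce them. It is not a priori clear that such integer perturbations of the balanced solution $n_i=(2m+1)/9$, $d_i=(2m-8)/18$ exist in every residue class --- indeed the paper needs nine distinct templates for $m\geq 21$ and six individually designed grids --- so deferring this step leaves the statement unproven.

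Two concrete corrections to your outline. First, your list of exceptional small cases $m\in\{9,11,15,17,19\}$ is incomplete: the paper's parametric family for $m=18r+1$ only works for $r\geq 3$ (e.g.\ its block $(4r-3,\,2r+2)$ requires $2r+2\leq 4r-4$), so $m=37$ also needs an ad hoc grid, which the paper supplies. Second, your remark that for $m=13$ ``no admissible nine-tuple exists'' is stronger than anything the paper establishes; the paper merely fails to find an almost $13$-regular construction and explicitly leaves open in \cref{sec:remarks} whether an almost $13$-regular $27$-vertex graph without induced $5$-cycles exists. The exclusion of $m=13$ from the lemma reflects the absence of a known construction, not a proof of infeasibility.
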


\begin{proof}[\bf{Proof}]
We will construct the required graph as the grid graph  
$\left[\begin{smallmatrix}
  G_1 & G_2 & G_3 \\
  G_4 & G_5 & G_6 \\
  G_7 & G_8 & G_9 
\end{smallmatrix}\right]$, 
where $G_1$ is an almost regular graph 
without induced $5$-cycles, 
and $G_2,\ldots,G_9$ are regular graphs 
without induced $5$-cycles. 
To indicate that $G_1$ is almost regular, 
we will denote it with double parentheses: 
$((n,d))$ where $n$ is the number of vertices 
and $d$ is the degree of almost all vertices. 
In each case, $G_1,G_2,\ldots,G_9$ will be (almost) regular graphs 
whose existence is guaranteed by 
\cref{th:reg_even,th:reg_odd,th:4r,th:4r+2,th:almost}, 
or a $3$-vertex graph with $1$ edge (which is almost $1$-regular). 

First we will give constructions for $m=9,11,15,17,19,37$. 
They are, respectively, 
\begin{align*}
\begin{bmatrix}
  ((3,1)) & (2,0) & (2,0) \\
   (2,0)  & (2,1) & (2,1) \\
   (2,0)  & (2,1) & (2,1) 
\end{bmatrix}, 
\;\;\;\;
\begin{bmatrix}
  ((3,1)) & (3,0) & (3,0) \\
   (2,1)  & (3,2) & (2,0) \\
   (2,1)  & (2,0) & (3,2) 
\end{bmatrix}, 
\end{align*}\begin{align*}
\begin{bmatrix}
  ((3,1)) & (4,3) & (2,1) \\
   (4,0)  & (4,0) & (4,2) \\
   (4,2)  & (3,0) & (3,2) 
\end{bmatrix}, 
\;\;\;\;
\begin{bmatrix}
  ((3,1)) & (4,2) & (4,2) \\
   (4,2)  & (4,1) & (4,1) \\
   (4,2)  & (4,1) & (4,1) 
\end{bmatrix}, 
\end{align*}\begin{align*}
\begin{bmatrix}
  ((3,1)) & (4,3) & (4,3) \\
   (5,4)  & (4,2) & (3,0) \\
   (5,0)  & (5,0) & (6,2) 
\end{bmatrix}, 
\;\;\;\;
\begin{bmatrix}
  ((7,5)) & (9,4) & ( 9,2) \\
   (6,5)  & (8,4) & ( 9,4) \\
   (8,5)  & (9,2) & (10,2) 
\end{bmatrix}. 
\end{align*}

Next we will consider separately $9$ cases for $m \geq 21$, $m \neq 37$, 
depending on $m$ modulo $18$. 

\medskip

{\it Case 1.} $m=18r+1$ with $r \geq 3$. The graph given by 
\[\begin{bmatrix}
  ((4r-1,2r+3)) & (4r+2,2r-3) & (4r+2,2r-3) \\
   (4r-3,2r+2)  & (4r+2,2r  ) & (4r+1,2r-2) \\
   (4r-3,2r+2)  & (4r+1,2r-2) & (4r+2,2r  )
\end{bmatrix}\] 
is almost $(18r+1)$-regular, on $36r+3$ vertices and has no induced $5$-cycles. 

\medskip

{\it Case 2.} $m=18r+3$ with $r \geq 1$. The graph given by 
\[\begin{bmatrix}
  ((4r+3,2r-1)) & (4r+2,2r-2) & (4r+2,2r-2) \\
   (4r  ,2r  )  & (4r  ,2r+1) & (4r  ,2r+1) \\
   (4r  ,2r  )  & (4r  ,2r+1) & (4r  ,2r+1)
\end{bmatrix}\] 
is almost $(18r+3)$-regular, on $36r+7$ vertices and has no induced $5$-cycles. 

\medskip

{\it Case 3.} $m=18r+5$ with $r \geq 1$. The graph given by 
\[\begin{bmatrix}
  ((4r+3,2r-1)) & (4r+2,2r-2) & (4r+2,2r  ) \\
   (4r+2,2r+2)  & (4r  ,2r-1) & (4r  ,2r+1) \\
   (4r  ,2r-2)  & (4r+2,2r+3) & (4r  ,2r+1)
\end{bmatrix}\] 
is almost $(18r+5)$-regular, on $36r+11$ vertices and has no induced $5$-cycles. 

\medskip

{\it Case 4.} $m=18r+7$ with $r \geq 1$. The graph given by 
\[\begin{bmatrix}
  ((4r+3,2r-1)) & (4r+2,2r-2) & (4r+2,2r+2) \\
   (4r+2,2r  )  & (4r+2,2r+1) & (4r  ,2r+1) \\
   (4r+2,2r  )  & (4r+2,2r+1) & (4r  ,2r+1)
\end{bmatrix}\] 
is almost $(18r+7)$-regular, on $36r+15$ vertices and has no induced $5$-cycles. 

\medskip

{\it Case 5.} $m=18r+9$ with $r \geq 1$. The graph given by 
\[\begin{bmatrix}
  ((4r+3,2r+3)) & (4r+3,2r-2) & (4r+3,2r-2) \\
   (4r  ,2r+1)  & (4r+3,2r+2) & (4r+2,2r  ) \\
   (4r  ,2r+1)  & (4r+2,2r  ) & (4r+3,2r+2)
\end{bmatrix}\] 
is almost $(18r+9)$-regular, on $36r+19$ vertices and has no induced $5$-cycles. 

\medskip

{\it Case 6.} $m=18r+11$ with $r \geq 1$. The graph given by 
\[\begin{bmatrix}
  ((4r+3,2r+3)) & (4r+2,2r+1) & (4r  ,2r+1) \\
   (4r+3,2r-2)  & (4r+4,2r  ) & (4r+3,2r+2) \\
   (4r+3,2r  )  & (4r+3,2r  ) & (4r+2,2r+2)
\end{bmatrix}\] 
is almost $(18r+11)$-regular, on $36r+23$ vertices and has no induced $5$-cycles. 

\medskip

{\it Case 7.} $m=18r+13$ with $r \geq 1$. The graph given by 
\[\begin{bmatrix}
  ((4r+3,2r+3)) & (4r+4,2r-1) & (4r+4,2r-1) \\
   (4r+1,2r+2)  & (4r+4,2r+2) & (4r+3,2r  ) \\
   (4r+1,2r+2)  & (4r+3,2r  ) & (4r+4,2r+2)
\end{bmatrix}\] 
is almost $(18r+13)$-regular, on $36r+27$ vertices and has no induced $5$-cycles. 

\medskip

{\it Case 8.} $m=18r+15$ with $r \geq 1$. The graph given by 
\[\begin{bmatrix}
  ((4r+3,2r+3)) & (4r+4,2r  ) & (4r+4,2r  ) \\
   (4r+2,2r+2)  & (4r+4,2r+1) & (4r+4,2r+1) \\
   (4r+2,2r+2)  & (4r+4,2r+1) & (4r+4,2r+1)
\end{bmatrix}\] 
is almost $(18r+15)$-regular, on $36r+31$ vertices and has no induced $5$-cycles. 

\medskip

{\it Case 9.} $m=18r+17$ with $r \geq 1$. The graph given by 
\[\begin{bmatrix}
  ((4r+3,2r+3)) & (4r+4,2r+1) & (4r+4,2r+1) \\
   (4r+2,2r+2)  & (4r+4,2r+2) & (4r+4,2r+2) \\
   (4r+4,2r+2)  & (4r+5,2r  ) & (4r+5,2r  )
\end{bmatrix}\] 
is almost $(18r+17)$-regular, on $36r+35$ vertices and has no induced $5$-cycles. 
\end{proof}

\begin{lemma}\label{th:27}
There exists a graph with 27 vertices where $24$ vertices have degree $13$, three vertices have degree $12$, and there is no induced $5$-cycle.
\end{lemma}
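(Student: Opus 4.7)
The plan is to construct the required graph as a $3\times 3$ grid graph of small (almost) regular pieces, in the sense of \cref{sec:Goodman}. The specific obstruction at $n=27$---which is also what excludes this case from \cref{th:main}---is that the ``balanced'' grid with all blocks of size $3$, so effective elsewhere, is unavailable here: when every $n_{ij}=3$ each row/column sum equals $9$, the external degree at every block is $12$, and attaining total degree $13$ would demand a $1$-regular graph on $3$ vertices, which does not exist. Size symmetry therefore has to be broken, and breaking it will inevitably produce (at least) three degree-$12$ vertices rather than the single one that an extremal Goodman graph would have.

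A convenient choice is the diagonally symmetric block-size pattern
\[
(n_{ij}) \;=\; \begin{pmatrix} 4 & 3 & 3 \\ 3 & 3 & 3 \\ 3 & 3 & 2 \end{pmatrix},
\]
with row and column sums $(R_1,R_2,R_3) = (C_1,C_2,C_3) = (10,9,8)$. The external degree of a vertex in block $(i,j)$ is $T_{ij} := R_i + C_j - 2n_{ij}$, which here takes values in $\{11,12,13\}$ across the nine blocks. Each such value lies within $1$ of an in-block degree realizable by one of the tiny graphs $\bar{K_3}$, $K_3$, $2K_2$, $K_2$, or $K_2+K_1$, none of which contains an induced $5$-cycle.

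To finish, I would fill the blocks as follows: $2K_2$ in the $(1,1)$-corner (making all four vertices there have total degree $13$), $K_2$ in the $(3,3)$-corner, $\bar{K_3}$ in the two blocks with $T_{ij}=13$, $K_3$ in the two blocks with $T_{ij}=11$, and $K_2+K_1$ in each of the three remaining blocks with $(n_{ij},T_{ij})=(3,12)$. A quick tally shows that the three copies of $K_2+K_1$ are the sole source of degree-$12$ vertices (one apiece), yielding exactly $24$ vertices of degree $13$ and $3$ of degree $12$. Since every in-block graph has at most four vertices and so contains no induced $5$-cycle, the grid-graph principle noted in \cref{sec:Goodman} implies that the assembled $27$-vertex graph does not either. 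The only genuinely non-routine step is the first one: locating block sizes that sum to $27$, avoid the degenerate ``all threes'' configuration, and yield external degrees $T_{ij}$ that can be matched by the very limited inventory of small (almost) regular graphs; everything after that is a matter of bookkeeping.
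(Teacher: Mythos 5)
Your construction is correct and follows essentially the same route as the paper: the paper's proof is likewise a $3\times 3$ grid graph of tiny $C_5$-free blocks, namely $\left[\begin{smallmatrix} (4,0) & (4,0) & (4,0) \\ (3,2) & (2,1) & (2,1) \\ (2,0) & (3,2) & (3,2) \end{smallmatrix}\right]$, which concentrates the three degree-$12$ vertices in a single $K_3$ block, whereas you spread them across three copies of $K_2+K_1$. Your degree bookkeeping checks out (external degrees $R_i+C_j-2n_{ij}$ give exactly $24$ vertices of degree $13$ and $3$ of degree $12$), so this is merely a different parameter choice within the identical grid-graph method.
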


\begin{proof}[\bf{Proof}]
$\left[\begin{smallmatrix}
  (4,0) & (4,0) & (4,0) \\
  (3,2) & (2,1) & (2,1) \\
  (2,0) & (3,2) & (3,2) 
\end{smallmatrix}\right]$ 
is the required graph. 
The number of triangles and anti-triangles in this graph is 
one higher than $M(27)$. 
\end{proof}

\Cref{th:main} follows from \cref{th:4r,th:4r+2,th:4r+1_or_3,th:27}.

\section{Concluding remarks}\label{sec:remarks}

The case $n=27$ remains unresolved 
and is too big for exhaustive computer search. 
There are three distinct poissibilities:
\begin{itemize}
\item 
$T(27,5,3) = M(27) + 1$ (similar to $n=5,7,11,15)$.
\item 
Every almost 13-regular 27-vertex graph contains an induced 5-cycle,
but (similarly to $n=13$) there exists 
a Tur\'{a}n $(27,5,3)$-system of size $M(27)$ 
which can not be represented by triangles and anti-triangles 
of an almost $13$-regular $27$-vertex graph without induced 5-cycles. 
In this case, $T(27,5,3) = M(27)$.
\item 
There exists an almost 13-regular 27-vertex graph 
without induced 5-cycles, 
so $T(27,5,3) = M(27)$.
\end{itemize}
The first possibility seems more likely.

\medskip

Though the size of the unique extremal Tur\'{a}n $(13,5,3)$-system 
is equal to $M(13)=52$,
its triples can not be represented 
by triangles and anti-triangles of a $6$-regular $13$-vertex graph $G$. 
Indeed, the $52 = 13 \cdot 4$ triples of the Tur\'{a}n system 
are collinear triples of points in the projective geometry PG(2,3). 
If these triples were triangles and anti-triangles in $G$, 
then the four points of each line would induce either a clique 
or an independent set in $G$. 
As the number of lines is odd, 
the number of edges in $G$ and the number of edges in its complement
can not be equal. 

\medskip

When four elements span exactly three triples, 
we call them a $(4,3)$-\emph{configuration}. 
It is easy to see that a system of triangles 
and anti-triangles of a graph 
can not produce a $(4,3)$-configuration. 
The system of collinear triples in PG(2,3) 
does not contain such a configuration either. 
Let $B_k$ be the currently best known upper bound for $T(n,k,3)$, 
so $B_5(n) = M(n)+1$ if $n=5,7,11,15,27$, 
and $B_5(n)=M(n)$ otherwise. 
Then for any $n$, 
there exists a Tur\'{a}n $(n,5,3)$-system of size $B_5(n)$ 
that does not contain a $(4,3)$-configuration. 
The same observation can be made about 
Tur\'{a}n $(n,k,3)$-systems of size $B_k(n)$ for any $k$. 
Razborov \cite{Razborov:2010} and Pikhurko \cite{Pikhurko:2011} 
proved that for sufficiently large $n$, 
the minimum size of a Tur\'{a}n $(n,4,3)$-system 
that does not contain a $(4,3)$-configuration 
is the same as the conjectured value of $T(n,4,3)$. 
Similarly to their results, it would be interesting to prove 
that every Tur\'{a}n $(n,5,3)$-system without a $(4,3)$-configuration 
must have at least $M(n)$ triples.

\medskip

Fon-Der-Flaass \cite{Flaass:1988} used an oriented graph
that does not contain induced directed $4$-cycles 
to construct a Tur\'{a}n $(n,4,3)$-system. 
Our approach is somewhat similar: 
we start with an ordinary graph 
that does not contain induced $5$-cycles 
and use its triangles and anti-triangles 
to construct a Tur\'{a}n $(n,5,3)$-system. 
It is unclear if both approaches could be unified and generalized.

\medskip

There are two self-complementary graphs on $5$ vertices: 
the $5$-cycle and the \emph{bull's head graph} (a triangle with two pendant edges).
Hurkens \cite[p. 379]{Prachatice:1992} 
conjectured that every $\frac{n-1}{2}$-regular $n$-vertex graph 
must contain at least one of them as an induced subgraph.
This conjecture is still open.

\vspace{4mm}
{\bf Acknowledgments.}
No funding was received for conducting this study.
We would like to thank 
Zolt\'{a}n F\"{u}redi and Mikl\'{o}s Simonovits 
for suggesting the references to Tur\'{a}n's conjecture. 
We are also grateful to the anonymous reviewer 
for helping us to improve the presentation of this paper.

\vspace{4mm}
{\bf Conflict of Interests Statement.} 
The authors have no relevant financial or non-financial interests to disclose.

\vspace{4mm}
{\bf Data Accessibility Statement.}
All data analyzed during this study are included in this article.


\begin{thebibliography}{99}

\bibitem{Boyer:1994}
E. D. Boyer, D. L. Kreher, S. P. Radziszowski, and A. Sidorenko: 
\newblock \emph{On $(n,5,3)$-{T}ur\'{a}n systems}, 
\newblock \emph{Ars Combin.}, {\bf 37}(1994), 13--31. 

\bibitem{Erdos:1977}
P. Erd\H{o}s: 
\newblock \emph{Paul {T}ur\'{a}n, 1910--1976: his work in graph theory}, 
\newblock \emph{J. Graph Theory}, {\bf 1}(1977), 97--101,
\doi{10.1002/jgt.3190010204}.

\bibitem{Flaass:1988}
D. G. Fon-Der-Flaas:
\newblock \emph{Method for construction of $(3,4)$-graphs},
\newblock \emph{Math. Notes}, {\bf 44}(1988), 781--783,
\doi{10.1007/BF01158925}.

\bibitem{Goodman:1959}
A. W. Goodman:
\newblock \emph{On sets of acquaintances and strangers at any party},
\newblock \emph{Amer. Math. Monthly}, {\bf 66}(1959), 778--783, 
\doi{10.2307/2310464}.

\bibitem{Katona:1964}
G. Katona, T. Nemetz, and M. Simonovits: 
\newblock \emph{On a graph problem of {T}ur\'{a}n} (in {H}ungarian), 
\newblock \emph{Mat. Lapok}, {\bf 15}(1964), 228--238. 

\bibitem{Mantel:1907}
W. Mantel: 
\newblock \emph{Vraagstuk {XXVIII}}, 
\newblock \emph{Wiskundige Opgaven met de Oplossingen}, {\bf 10}(1907), 60--61. 

\bibitem{Markstrom:2022}
K. Markstr\"{o}m: 
\newblock \emph{Covering designs}, available at \hspace*{\fill} \linebreak
\href{http://abel.math.umu.se/~klasm/Data/hypergraphs/coveringdesign.html}{abel.math.umu.se/{\textasciitilde}klasm/Data/hypergraphs/coveringdesign.html}.

\bibitem{Pikhurko:2011}
O. Pikhurko: 
\newblock \emph{The minimum size of $3$-graphs without a $4$-set spanning no or exactly three edges}, 
\newblock \emph{European J. Combin.}, {\bf 32}(2011), 1142--1155, 
\doi{10.1016/j.ejc.2011.03.006}.

\bibitem{Prachatice:1992}
\newblock \emph{Problems proposed at the problem session of the {P}rachatice {C}onference on {G}raph {T}heory},
\newblock \emph{Ann. Discrete Math.}, {\bf 51}(1992), 375--384,
\doi{10.1016/S0167-5060(08)70659-9}.

\bibitem{Razborov:2010}
A. Razborov: 
\newblock \emph{On $3$-hypergraphs with forbidden $4$-vertex configurations}, 
\newblock \emph{SIAM J. Discrete Math.}, {\bf 24}(2010), 946--963, 
\doi{10.1137/090747476}.

\bibitem{Sidorenko:1995}
A. Sidorenko: 
\newblock \emph{What we know and what we do not know about {T}ur\'{a}n numbers}, 
\newblock \emph{Graphs Combin.}, {\bf 11}(1995), 179--199, 
\doi{10.1007/BF01929486}.

\bibitem{Turan:1970}
P. Tur\'{a}n: 
\newblock \emph{Applications of graph theory to geometry and potential theory}, 
\newblock in \emph{Combinatorial Structures and Their Applications}, 
New York, Gordon and Breach, 1970, p.423--434. 

\bibitem{Turan:1941}
P. Tur\'{a}n: 
\newblock \emph{Egy gr\'{a}felm\'{e}leti sz\'{e}ls{\H{o}}\'{e}rt\'{e}kfeladatr\'{o}l}, 
\newblock \emph{Mat. Fiz. Lapok}, {\bf 48}(1941), 436--453. 

\end{thebibliography}
\end{document}